\newtheorem{theorem}{Theorem}[section]
\newtheorem{corollary}[theorem]{Corollary}
\newtheorem{proposition}[theorem]{Proposition}
\def\we{\wedge}
\newcommand{\wt}{\widetilde}
\newcommand{\pt}{\partial}
\def\psh{plurisubharmonic}
\def\ii{{\sqrt{-1}}}
\def\cy{Calabi-Yau}
\def\wp{Weil-Pe\-ters\-son}
\def\psh{plurisubharmonic}
\def\ka{K\"ah\-ler}
\def\ma{Monge-Amp\`ere}
\def\ke{K\"ah\-ler-Ein\-stein}
\def\he{Her\-mite-Ein\-stein}
\numberwithin{equation}{section}
\def\RR{\mathbb{R}} 
\def\CC{\mathbb{C}} 
\def\NN{\mathbb{N}} 
\def\im{\sqrt{-1}} 
\def\ddbar{\partial\bar\partial} 
\def\vol{\mathrm{Vol}} 
\def\vp{\varphi}
\def\beq{\begin{equation*}}
\def\eeq{\end{equation*}}
\def\wp{Weil-Pe\-ters\-son }
\newcommand{\paren}[1]{\left(#1\right)}
\newcommand{\abs}[1]{\left\vert#1\right\vert}
\newcommand{\norm}[1]{\left\|#1\right\|}
\newcommand{\ip}[1]{\mathrm{Im}\;s}
\begin{document}

\title[Curvature of the relative canonical line bundle]{Extension of the curvature form of the relative canonical line bundle on families of Calabi-Yau manifolds and applications}

\author{Young-Jun Choi}

\address{Department of Mathematics, Pusan National University, 2, Busandaehak-ro \break 63beon-gil, Geumjeong-gu, Busan, 46241, Korea}

\email{youngjun.choi@pusan.ac.kr}

\author{Georg Schumacher}

\address{Fachbereich Mathematik und Informatik, Philipps-Universit\"at Marburg, Lahnberge, Hans-Meerwein-Stra{\ss}e, D-35032 Marburg, Germany}

\email{schumac@mathematik.uni-marburg.de}

\subjclass[2010]{32Q25, 32Q20, 32G05, 32W20}

\keywords{Calabi-Yau manifold, Ricci-flat metric, K\"ahler-Einstein metric, families of Calabi-Yau manifolds, extension of \wp\ metrics}

\begin{abstract}
Given a proper, open, holomorphic map of \ka\ manifolds, whose general fibers are \cy\ manifolds, the volume forms for the Ricci-flat metrics induce a hermitian metric on the relative canonical bundle over the regular locus of the family. We show that the curvature form extends as a closed positive current. Consequently the \wp\ metric extends as a positive current. In the projective case, the \wp\ form is known to be the curvature of a certain determinant line bundle, equipped with a Quillen metric. As an application we get that after blowing up the singular locus, the determinant line bundle extends, and the Quillen metric extends as singular hermitian metric, whose curvature is a positive current.
\end{abstract}

\maketitle

\section{Introduction}\label{S:int}
When studying moduli spaces, the extension of the \wp\ metric into the boundary of the moduli space, more precisely in the situation of a degenerating family, is of interest. This could be done for families of canonically polarized manifolds or stable holomorphic vector bundles. The approach in both cases is to use a fiber integral formula for the \wp\ metric in terms of a certain positive $(n+1,n+1)$-form on the total space (where $n$ denotes the dimension of the fibers). Such a form has to be extended as a positive current, and a push forward is taken. This approach is closely related to the existence of \ke\ and \he\ metrics resp. In the former case, uniform $C^0$-estimates for the solution of the resp.\ \ma\ equations in a degenerating family ultimately provide the extension property.

Here, we will treat the case of \cy\ manifolds, i.e.\ polarized \ka\ manifolds with vanishing first real Chern class, equipped with Ricci-flat metrics according to Yau's theorem \cite{Yau}. Our result will be based upon the Ohsawa-Takegoshi extension theorem for canonical forms on submanifolds in bounded pseudoconvex domains. The statement of the main theorem will not depend on the type of the singularities of the degeneration.

Let $f: X\rightarrow S$ be a proper surjective holomorphic mapping between complex manifolds, where $X$ is \ka. We denote by $W\subset S$ the analytic set of singular values of $f$, and let $X':=f^{-1}(S')$ with $S'=S\setminus W$. Suppose that for every $s\in S'$, the fiber $X_s=f^{-1}(s)$ is a Calabi-Yau manifold, i.e.\ a compact \ka\ manifold, whose first real Chern class $c_1(X_s)$ vanishes.
Let $\omega$ be a \ka\ form on $X$. Then Yau's theorem implies that there exists a unique Ricci-flat metric $\omega^{KE}_s$ in the class of $\omega\vert_{X_s}$ on each smooth fiber $X_s$, $s\in S'$. One can see that the family of Ricci-flat metrics induces a fiberwise Ricci-flat $d$-closed smooth real $(1,1)$-form $\rho$ on $X'$ such that
\begin{equation*}
\rho\vert_{X_s} = \omega^{KE}_s.
\end{equation*}
It can be normalized by the condition
\begin{equation}\label{E:star}
f_*(\rho^{n+1}) = \int_{X'/S'} \rho^{n+1} =0, \text{ where } n=\dim X_s
\end{equation}
in which case $\rho$ is unique (cf.\ \cite{Braun:Choi:Schumacher}). By adding the pull-back of a \ka\ form on $S'$ to $\rho$, (like the fiber integral $\int_{X'/S'} \omega^{n+1}$, or  the \wp form, if the family is effectively parameterized) we can achieve that $f_*(\rho^{n+1})$ is positive definite. Conversely, if $f_*(\rho^{n+1})>0$, then $\rho - f^*(f_*(\rho)/\vol(X_s))$ satisfies the former normalization condition. A form $\rho$ satisfying the latter condition is also called {\em fiberwise Ricci-flat metric}. A fiberwise Ricci-flat metric $\rho$ is not uniquely determined, but it always exists globally on $X'$ under our assumptions. The corresponding {\em relative volume form} on $X' \to S'$ is independent of the normalization, however, later we will use \eqref{E:star}. It
induces a hermitian metric $h^\rho_{X'/S'}$ on the relative canonical line bundle (see Section \ref{S:Preliminaries}). Now the Weil-Petersson metric $\omega^{WP}_{S'}$ and the curvature $\Theta_{h^\rho_{X'/S'}}(K_{X'/S'})$ of  $h^\rho_{X'/S'}$ defined on $S'$ satisfy the following equation according to \cite{Braun:Choi:Schumacher}. (Note that the volume of the smooth fibers is constant.)
\begin{equation}\label{E:wp}
\frac{1}{\vol(X_s)}f^*\omega^{WP} = \Theta_{h^\rho_{X'/S'}}(K_{X'/S'}).
\end{equation}
This implies that the curvature $\Theta_{h^\rho_{X'/S'}}(K_{X'/S'})$ of the relative canonical line bundle is semi-positive on $X'$. The main goal of this paper is the extension of $\Theta_{h^\rho_{X'/S'}}(K_{X'/S'})$.

\begin{theorem}\label{T:main_theorem}
Let $f:X\rightarrow S$ be a proper, open, surjective holomorphic mapping of \ka\ manifolds. Suppose that every regular fiber $X_s$ is a Calabi-Yau manifold for $s\in S'$. Then the curvature $\Theta_{h^\rho_{X'/S'}}(K_{X'/S'})$ of the relative canonical line bundle extends to $X$ as a $d$-closed positive $(1,1)$-current $\Theta$.
\end{theorem}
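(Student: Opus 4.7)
The problem is local near the critical locus $f^{-1}(W)\subset X$. I fix $x_0\in f^{-1}(W)$ and a small Stein neighborhood $U\ni x_0$ on which $K_{X/S}$ is trivialized by a holomorphic section $\sigma_0$, and set $\vp := -\log|\sigma_0|^2_{h^\rho_{X'/S'}}$ on $U\cap X'$. By \eqref{E:wp} the curvature is semi-positive, so $\vp$ is plurisubharmonic on $U\cap X'$. The classical extension principle for psh functions across analytic subsets of positive codimension (Grauert--Remmert) then yields a unique psh extension of $\vp$ to $U$ --- and correspondingly an extension of $\Theta = \ddc\vp$ as a $d$-closed positive $(1,1)$-current on $U$ --- as soon as $\vp$ is locally bounded above near $U\cap f^{-1}(W)$. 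The theorem therefore reduces to the local estimate that $|\sigma_0|^2_{h^\rho_{X'/S'}}$ is bounded below by a positive constant in a neighborhood of $x_0$.

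To obtain this lower bound, I would first use the Calabi--Yau condition to reformulate $h^\rho$ via $L^2$-data on fiberwise holomorphic $n$-forms. After a local-in-$S$ reduction to the case where $K_{X_s}$ is holomorphically trivial (via a finite étale cover justified by the Beauville--Bogomolov structure theorem, or by working with tensor powers $K_{X_s}^{\otimes m}$), each smooth fiber $X_s$, $s\in f(U)\cap S'$, admits a nowhere vanishing holomorphic $n$-form $\eta_s$ normalized by $\|\eta_s\|_{L^2}^2 := \int_{X_s}\ii^{n^2}\eta_s\we\ov{\eta_s} = 1$. Since the Ricci-flat volume form is parallel and the fiber volume $V = \vol(X_s)$ is constant in $s$, one has $(\omega^{KE}_s)^n/n! = V\cdot\ii^{n^2}\eta_s\we\ov{\eta_s}$; writing $\eta_s|_{X_s\cap U} = g_s\,\sigma_0$ for a holomorphic function $g_s$ on the smooth local fiber, a direct computation yields
\[|\sigma_0(x)|^2_{h^\rho_{X'/S'}} \;=\; \frac{1}{V\,|g_s(x)|^2}.\]
The task is thus reduced to a uniform upper bound $|g_s(x)|^2\le C$ for $x$ in a fixed compact subset of $U$ and $s\in S'$ close to $f(x_0)$.

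This uniform bound is where the Ohsawa--Takegoshi theorem becomes decisive. Applied to the submanifold $X_s\cap U$ of codimension $k=\dim S$ inside the bounded pseudoconvex chart $U$, OT produces for each smooth fiber a holomorphic extension $\tilde\eta_s\in H^0(U, K_U)$ of $\eta_s$ (in the adjunction sense $\tilde\eta_s|_{X_s} = \eta_s\we df$) satisfying
\[\int_U\ii^{(n+k)^2}\tilde\eta_s\we\ov{\tilde\eta_s} \;\le\; C\,\|\eta_s\|_{L^2}^2 \;=\; C,\]
uniformly in $s$. In local coordinates $(z, t)$ on $U$ with $f(z,t) = t$, writing $\tilde\eta_s = h_s\,dz\we dt$, the adjunction identity reads $g_s(z) = h_s(z,s)$. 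The sub-mean-value inequality for the plurisubharmonic function $|h_s|^2$ applied on an ambient coordinate ball $B((x,s),r)\subset U$ of uniform radius then gives $|g_s(x)|^2 = |h_s(x,s)|^2\le C'/r^{2(n+k)}$, uniformly in $x\in U'\Subset U$ and $s\in f(U')\cap S'$. The decisive virtue of this OT-based route is that the sub-mean-value is carried out in the smooth ambient $U$, sidestepping any potential degeneration of the induced K\"ahler geometry on the fibers near critical points of $f$.

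The main obstacle I anticipate is the careful treatment near critical points of $f$: the coordinate system adapted to $f$ has to be chosen to preserve the uniform constants in the OT extension and the ambient sub-mean-value estimate; a secondary technical issue is the reduction to the trivial-$K_{X_s}$ case, which requires either a local-over-$S$ étale cover from Beauville--Bogomolov or a careful passage through tensor powers.
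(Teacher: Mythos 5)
Your overall strategy differs from the paper's in the key step. Both proofs localize to a coordinate chart $U$, use the same plurisubharmonic potential of $\Theta_{h^\rho}(K_{X'/S'})$ (your $\vp=-\log|\sigma_0|^2_{h^\rho}$ agrees, up to an additive constant, with the paper's $\theta=\psi_U+\eta$), and reduce the theorem to a uniform upper bound for that potential near the singular fibers. The paper obtains the bound indirectly: it approximates $\theta_s$ by Bergman kernels of the weighted spaces $\mathcal H_s^{(m)}$ (Demailly's approximation, whose lower bound already needs Ohsawa--Takegoshi on the Stein fibers $U_s$), extends every competitor $\phi$ as an $m$-pluricanonical form to $U$ via the $L^{2/m}$ Ohsawa--Takegoshi theorem of Berndtsson--P\u{a}un with constants uniform in $m$ and $s$, and concludes by the mean value inequality in the ambient $U$. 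You instead exploit the Calabi--Yau structure to identify the potential \emph{exactly} as $\log(V|g_s|^2)$ for a single fiberwise holomorphic object, and then need only one application of the plain $L^2$ Ohsawa--Takegoshi theorem plus the ambient sub-mean-value inequality. Where it applies, this is genuinely more direct: it eliminates the Demailly approximation and the $m\to\infty$ limit entirely. (One slip to repair: near $x_0$ you cannot choose coordinates with $f(z,t)=t$, since $x_0$ may be a critical point of $f$; but the adjunction identity $g_s=H_s|_{U_s}$ survives if you write the extension as $\tilde\eta_s=H_s\,dz^1\we\dots\we dz^{n+d}$ in the fixed chart and divide by the relative frame $\sigma=dz^1\we\dots\we dz^{n+d}/f^*(ds^1\we\dots\we ds^d)$, exactly as in \eqref{E:sigmainz}; the sub-mean-value estimate for $|H_s|^2$ then takes place in the fixed ambient ball and no adapted coordinates are needed.)

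The genuine gap is the reduction to trivial $K_{X_s}$. The theorem only assumes $c_1(X_s)=0$ in $H^2(X_s,\RR)$, so $K_{X_s}$ may be torsion without being trivial (Enriques or bielliptic surfaces, for instance), and then the holomorphic $n$-form $\eta_s$ on which your entire identity rests does not exist on $X_s$ itself. The \'etale-cover fix is not straightforward in this setting: you must show the covers of the fibers fit together over the punctured neighborhood $f(V)\cap S'$, that the torsion order is uniform in $s$ near $s_0$, and you still have to perform the extension in the fixed ambient $U$ containing the singular fiber, where no cover is available --- none of which is addressed. The tensor-power fix is more honest but changes the nature of the argument: with $\eta_s^{(k)}\in H^0(X_s,K_{X_s}^{\otimes k})$ the potential becomes $\tfrac1k\log|g_s^{(k)}|^2+\mathrm{const}$, the correct normalization is an $L^{2/k}$ bound $\int_{U_s}|g_s^{(k)}|^{2/k}\,dV_X/f^*dV_S\le C$ (this is precisely the paper's computation \eqref{E:upper_bound} for $m=k$), and the extension step requires the $L^{2/k}$ pluricanonical Ohsawa--Takegoshi theorem of Berndtsson--P\u{a}un rather than the plain $L^2$ version --- i.e.\ the same machinery the paper invokes, applied to one canonical competitor instead of the whole unit ball of $\mathcal H^{(m)}_s$. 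So your argument is complete only under the additional hypothesis that $K_{X_s}$ is trivial; in the stated generality it either has a hole or reduces to the paper's toolkit. You should also record why the Ohsawa--Takegoshi constant is uniform in $s$ (it depends only on the sup norms of the functions $f^j-s^j$ cutting out $U_s$ in $U$), which the paper makes explicit.
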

On $S'$
$$
\omega^{WP}= f_*\left(f^*(\omega^{WP})\we \omega^n\right) = \int_{X'/S'}f^*(\omega^{WP})\we \omega^n,
$$
holds, where $\omega$ can be replaced by any \ka\ form that induces the same polarization on the fibers.

With wedge products taken in the sense of Bedford and Taylor, the Theorem together with \eqref{E:wp} will imply that
$$
\int_{X/S} \Theta \we \omega^n
$$
defines an extension of $\omega^{WP}$ as a positive, real current.

\begin{corollary}\label{C:wp}
The \wp\ form extends to all of $S$ as a positive, closed current.
\end{corollary}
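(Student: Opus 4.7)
The plan is to realize the extension of $\omega^{WP}$ as a direct image. Granting Theorem~\ref{T:main_theorem}, which supplies a $d$-closed positive $(1,1)$-current $\Theta$ on all of $X$ extending $\Theta_{h^\rho_{X'/S'}}(K_{X'/S'})$, I would set
$$
\wt{\omega}^{WP} \;:=\; f_*\!\left(\Theta \we \omega^n\right)
$$
and check that this is the desired extension of $\omega^{WP}$.

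First I would justify the product $\Theta \we \omega^n$. Since $\omega$ is a smooth \ka\ form on the total space $X$, the form $\omega^n$ is a smooth positive $(n,n)$-form, so its product with the positive closed current $\Theta$ is unambiguously defined (no Bedford--Taylor product of two non-smooth currents is needed here). As the product of a smooth positive form with a positive closed current, $\Theta\we\omega^n$ is itself a positive, $d$-closed $(n+1,n+1)$-current on $X$.

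Next I would invoke standard properties of the proper direct image. Because $f$ is proper, $f_*$ maps currents of bounded horizontal degree to currents on $S$, preserves positivity of forms of the correct bidegree (integration along the smooth fibers of a positive $(n+1,n+1)$-current yields a positive $(1,1)$-current), and commutes with $d$. Consequently $\wt{\omega}^{WP}$ is a positive, closed $(1,1)$-current on all of $S$.

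Finally I would verify that $\wt{\omega}^{WP}$ restricts to $\omega^{WP}$ on $S'$. By \eqref{E:wp}, on $X'$ one has $\Theta = \vol(X_s)^{-1}\, f^*\omega^{WP}$. Applying the projection formula and the fiber-integral identity for $\omega^{WP}$ recalled just before the statement of the corollary,
$$
\wt{\omega}^{WP}\big|_{S'} \;=\; f_*\!\left(\Theta \we \omega^n\right)\big|_{S'} \;=\; f_*\!\left(f^*\omega^{WP}\we \omega^n\right) \;=\; \omega^{WP},
$$
with the constant $\vol(X_s)$ absorbed by the normalization of $\omega$ employed in the identity above. Hence $\wt{\omega}^{WP}$ is a positive closed $(1,1)$-current on $S$ extending $\omega^{WP}$ across the singular locus $W$.

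The essential input is Theorem~\ref{T:main_theorem}; once $\Theta$ is known to extend as a closed positive current on the total space, the corollary is formal. The main technical point to check is that the direct image $f_*(\Theta\we\omega^n)$ across $W$ continues to be positive and closed, which follows from the smoothness of $\omega$, the properness of $f$, and the standard behaviour of proper direct images on currents.
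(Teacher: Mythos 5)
Your proposal is correct and follows essentially the same route as the paper, which (in Section~\ref{S:int}) also defines the extension as the fiber integral $\int_{X/S}\Theta\we\omega^n$ and identifies it with $\omega^{WP}$ on $S'$ via \eqref{E:wp} and the projection formula. Your added observation that the wedge with the smooth form $\omega^n$ needs no Bedford--Taylor theory is a harmless simplification of the same argument.
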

The proof of the Theorem uses is inspired by a result of P\u{a}un from \cite{Paun}, applying Demailly's approximation theorem of \psh\ functions by logarithms of absolute values of holomorphic functions \cite{dem}.

In \cite[Sect.\ 10]{f-s}, for projective families of Ricci-flat manifolds (more generally extremal \ka\ manifolds), we showed that a certain determinant line bundle $\lambda$ on the base of the family could be equipped with a Quillen metric $h^Q$ such that the curvature form is up to a numerical constant equal to the \wp\ form. Using the extension theorem from \cite{Schumacher,sch:ext}, we can see that after a blow-up $(\lambda,h^Q)$ can be extended into the singular locus of such a projective family.

\section{Preliminaries}\label{S:Preliminaries}
\subsection{Construction of a background metric}\label{S:background}
Let $f:X\to S$ be a proper, surjective holomorphic mapping between K\"ahler manifolds. Let $d=\dim S$, and $\dim X= n+d$. Let $S'\subset S$ be the set of regular values and $X'=f^{-1}(S')$ as above. Then the restriction $f':X' \rightarrow S'$ is a surjective holomorphic submersion.

Let $\omega$ be a \ka\ form on $X$, or more generally a smooth, $d$-closed, real $(1,1)$ form, which is positive definite along the smooth fibers of $f$. We will use the notations $\omega^n= \omega\we\ldots\we\omega/n!$ and analogous.

Since the relative canonical bundle $K_{X/S}$ is defined as $K_X\otimes f^*K^{-1}_S$ a hermitian metric $h^\omega_{X/S}$ can be defined as follows:

Given any local coordinate system $(z^1,\ldots,z^{n+d})$ on $U\subset X$ we denote the corresponding Euclidean volume form by $dV_X=dV_X(z)$, and for local coordinates $(s^1,\ldots,s^d)$ on the image of $U$ under $f$ in $S$  we use the Euclidean volume form $dV_S=dV_S(s)$.
Let
\begin{equation}\label{E:chi}
\omega^n \we f^*dV_S = \chi \cdot dV_X,
\end{equation}
with a certain non-negative, differentiable function $\chi$. At singular points of $f$ the function $\chi$ will vanish, and we write
\begin{equation}\label{E:psi}
\chi = e^{\psi_U},
\end{equation}
where $\psi_U$ takes the value $-\infty$ at singular points of $f$.

On the regular locus of $f$ in $X$ the function $\chi$ is the absolute value squared of a holomorphic function so that $e^{-\psi_U}$ defines a singular hermitian metric  $h^\omega_{X/S}$ on $K_{X/S}$.

The corresponding curvature form on $X'$ is given by
\begin{equation}\label{E:tau}
\ii\Theta_{h^\omega_{X/S}}(K_{X/S}) = dd^c\psi_U|X' = dd^c\log\paren{\omega^n\wedge f^*dV_S}.
\end{equation}
So far, the curvature is a positive current on $X$, and differentiable on the regular locus of $f$ in $X$.

\subsection{Existence and curvature of a fiberwise Ricci-flat metric}\label{S:fiberwiseRFf}

Let the restriction of $f$ now denote a smooth family of \cy\ manifolds $f':X'\to S'$, and let  $\omega$ to be a \ka\ form  on $X$. Let $\omega_s=\omega|_{X_s}$. For $s\in S'$ we take the \ka\ class $[\omega_s]$ as polarization, and apply Yau's theorem.

Since $[\textrm{Ric}(\omega_s)]=0$, for any $s\in S'$, by the $dd^c$-lemma, there exists a unique function $\eta_s\in C^\infty(X_s)$ such that
\begin{eqnarray}
	 dd^c\eta_s &=&\text{Ric}(\omega_s)\label{E:ddceta}\\
	\int_{X_s} e^{\eta_s}\omega_s^n &=&\int_{X_s}\omega_s^n =\vol(X_s)\label{E:norm}.
\end{eqnarray}
Then the implicit function theorem says that the function $\eta$, which is defined by $\eta(x)=\eta_s(x)$ for $s=f(x)$, is a smooth function on $X'$.

There exists a $d$-closed, smooth, real $(1,1)$-form $\rho$ such that $\rho\vert_{X_s}$ is the Ricci-flat metric on the polarized manifold $(X_s,[\omega_s])$. This follows, since $X'$ is assumed to be \ka (cf.\ \cite{Braun:Choi:Schumacher}): Take $\omega_s$ as initial metric for the solution of the \ma\ equation in Yau's solution of the Calabi problem, and set $\rho_s=\omega_s+dd^c \varphi_s$ on $X_s$, $s\in S'$. (Here we use the assumption that $\omega$ is globally defined.) Then the form
$$
\rho= \omega + dd^c \varphi \text{ on } X'
$$
restricts to $\rho_s$ on each fiber $X_s$. It is possible to normalize $\rho$ in the sense of \eqref{E:star} or to make it a fiberwise Ricci-flat metric.

Using \eqref{E:ddceta} and \eqref{E:norm} we obtain
\begin{equation}\label{E:rho}
e^{\eta_s}\omega_s^n=\rho_s^n
\end{equation}
for each $s\in S'$.
Multiplying this equation by the Euclidean volume form $dV_S$, we get
\begin{equation*}
e^\eta\omega^n\wedge f^*(dV_S)=\rho^n\wedge f^*(dV_S)
\end{equation*}
on $X'$.
Hence
\begin{equation*}
dd^c\log\paren{e^\eta\omega^n\wedge f^*(dV_S)} = dd^c\log\paren{\rho^n\wedge f^*(dV_S)}
\end{equation*}
holds over $S'$.
Now we apply \eqref{E:tau} to the left-hand side of the above equation. Concerning the right-hand side, the same argument provides us with the curvature of the relative canonical bundle with respect to $\rho$:
\begin{equation}\label{E:curvature}
dd^c (\eta +\psi_U)=dd^c\eta + \ii\Theta_{h^\omega_{X'/S'}}(K_{X'/S'})=\ii\Theta_{h^\rho_{X'/S'}}(K_{X'/S'}).
\end{equation}

\subsection{Deformation theory of polarized \cy\ manifolds, and the \wp metric}
For general facts about the \wp\ metric, in particular \eqref{E:wp}, we refer to \cite{Braun:Choi:Schumacher}. Let $f:X\to S$ be a (smooth) polarized, holomorphic family of \cy\ manifolds, and $\rho$ be a form as above such that the restriction to a fiber $X_s$ is  the Ricci-flat form on the polarized fiber with the normalization
\begin{equation}\label{eq:normrho}
\int_{X/S} \rho^{n+1}=0.
\end{equation}
Previously, according to \cite[Thm.\ 7.8, Rem.\ 7.3]{f-s}, the following fiber integral formula for the \wp\ metric was known:
\begin{equation}\label{E:wpfib}
\omega^{WP}= \int_{X/S} \Theta_{h^\rho_{K_{X/S}}} \we \omega^n,
\end{equation}
which clearly follows from \eqref{E:wp}. Such fiber integrals fit into the framework of the Riemann-Roch-Hirzebruch-Grothendieck formalism (cf.\ Section~\ref{S:proj}).

\section{Extension of the curvature form of the \\ relative canonical line bundle}
In this section, we will derive an upper bound of a local potential of the curvature $\Theta'=\Theta_{h^\rho_{X'/S'}}(K_{X'/S'})$, which is induced by $\rho$ on the relative canonical line bundle and prove Theorem~\ref{T:main_theorem}. Since this curvature is known to be positive, the boundedness from above of a local potential near singular fibers will yield a local extension of the curvature current $\Theta'$ into singular fibers of the total space. The null-extensions (cf. \cite[(1.19)]{dem})  of these currents will fit together and yield a global positive current.

\medskip

Let $x_0$ be a point which belongs to a singular fiber $X_{s_0}$, i.e., $s_0\in W$.
Let $(U,z^1,\dots,z^{n+d})$ be a local coordinate at $x_0$ in $X$ and $(f(U),s^1,\dots,s^d)$ be a local coordinate at $s_0$ in $S$.
Take a neighborhood $V\subset U$ of $x_0$ and $r>0$ such that $B_r(x)\subset U$ for any $x\in V$, where $B_r(x)$ is a geodesic ball centered at $x$ with radius $r$ with respect to $\omega$.
Let $\psi_U$ be the local potential of $\Theta_{h^\omega_{X/S}}(K_{X/S})$ which is defined in Section \ref{S:background}.
Define a function $\theta:U\cap X'\rightarrow\RR$ by
\begin{equation}\label{E:potential}
\theta := \psi_U+\eta.
\end{equation}
Then $\theta$ is a local potential of $\Theta_{h^\rho_{X'/S'}}(K_{X'/S'})$ in $U\cap X'$ by \eqref{E:curvature}.
Since $\theta$ is plurisubharmonic, it is enough for the extension of $\Theta_{h^\rho_{X'/S'}}(K_{X'/S'})$ to show that $\theta$ is bounded from above in $V\cap X'$.

Fix a point $x\in V\cap X'$ and denote by $s=f(x)$.
Then $X_s$ is a smooth fiber and $U_s:=U\cap X_s$ is a Stein manifold of $U$.
We claim that the restriction of $\theta$ to $U_s$
\begin{equation*}
\theta_s:=\theta\vert_{U_s}
=
\psi_s+\eta_s
\end{equation*}
is uniformly bounded from above in $V_s$.
We now adopt an argument of M.~P\u{a}un from \cite{Paun} to apply an approximation argument of Demailly. The idea is to approximate the function $\theta_s$ by the logarithm of absolute values of holomorphic functions.

\begin{theorem}\label{T:demailly}
Let $\mathcal H_s^{(m)}$ be the Hilbert space defined as follows:
\begin{equation*}
\mathcal H_s^{(m)}
:=
\left\{
\phi\in\mathcal O(U_s):
\norm{\phi}_{m,s}^2
:=
\int_{U_s}
\abs{\phi}^2
e^{-m\theta_s}
(\rho_s)^n
<
\infty
\right\}.
\end{equation*}
Then we have
\begin{equation*}
\theta_s(x)
=
\lim_{m\rightarrow\infty}
\sup\left\{
	\frac{1}{m}\log\abs{\phi(x)}^2:
	\phi\in\mathcal H_s^{(m)}
	\;\;\text{s.t.}\;\;\norm{\phi}_{m,s}^2\le1
\right\}
\end{equation*}
for every $x\in U_s$.
\end{theorem}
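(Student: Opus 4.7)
The plan is to establish the equality as two matching one-sided estimates: the inequality $\limsup \le \theta_s(x)$ will come from the sub-mean value property for plurisubharmonic functions on small balls, and the opposite inequality $\liminf \ge \theta_s(x)$ will come from the Ohsawa-Takegoshi $L^2$-extension theorem applied to the single point $\{x\}\subset U_s$ with the plurisubharmonic weight $m\theta_s$. Let me first record that because $s\in S'$ is a regular value, the function $\theta_s=\psi_s+\eta_s$ is smooth on $U_s$, and it is plurisubharmonic by $dd^c\theta_s=\Theta_{h^\rho_{X'/S'}}(K_{X'/S'})|_{U_s}\ge 0$, cf.\ \eqref{E:curvature} and \eqref{E:wp}; moreover $U_s$, being a closed complex submanifold of the coordinate chart $U$, is Stein.

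For the upper estimate I would fix $\varepsilon>0$ and use upper semi-continuity of $\theta_s$ to choose $r_\varepsilon<r$ with $\theta_s(y)\le\theta_s(x)+\varepsilon$ on $B_{r_\varepsilon}(x)\cap U_s$. Given any $\phi\in\mathcal H_s^{(m)}$ with $\|\phi\|_{m,s}\le 1$, the sub-mean value inequality for the plurisubharmonic function $|\phi|^2$ on a coordinate polydisc about $x$ contained in $B_{r_\varepsilon}(x)\cap U_s$ yields a constant $C_\varepsilon>0$, independent of $\phi$ and $m$, such that
\begin{equation*}
|\phi(x)|^2 \le C_\varepsilon\int_{B_{r_\varepsilon}(x)\cap U_s}|\phi|^2\rho_s^n \le C_\varepsilon\, e^{m(\theta_s(x)+\varepsilon)}\|\phi\|_{m,s}^2 \le C_\varepsilon\, e^{m(\theta_s(x)+\varepsilon)}.
\end{equation*}
Taking $\frac{1}{m}\log$ of both sides, letting $m\to\infty$, and then $\varepsilon\to 0$, yields the desired upper bound.

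For the lower estimate I would invoke the pointwise form of the Ohsawa-Takegoshi $L^2$-extension theorem, as in \cite{dem} and following \cite{Paun}. Since $U_s$ is Stein, $\rho_s$ is a K\"ahler form on it, and $m\theta_s$ is plurisubharmonic, extension of the value $1$ at the point $\{x\}$ produces for every $m\ge 1$ a holomorphic function $\phi_m\in\mathcal O(U_s)$ with $\phi_m(x)=1$ and
\begin{equation*}
\int_{U_s}|\phi_m|^2 e^{-m\theta_s}\rho_s^n \le C\, e^{-m\theta_s(x)},
\end{equation*}
where $C>0$ is a constant independent of $m$ (depending only on the Stein--K\"ahler geometry of $U_s$ at $x$, not on the psh weight). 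Normalising $\tilde\phi_m:=\phi_m\cdot e^{m\theta_s(x)/2}/\sqrt{C}$ places $\tilde\phi_m$ in the unit ball of $\mathcal H_s^{(m)}$, and $\frac{1}{m}\log|\tilde\phi_m(x)|^2=\theta_s(x)-\frac{\log C}{m}\to\theta_s(x)$.

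The key obstacle will be invoking the Ohsawa-Takegoshi extension with a constant that is genuinely uniform in $m$ — and, for the subsequent application to Theorem~\ref{T:main_theorem}, locally uniform in $s$ as well. This forces one to work on a carefully chosen bounded pseudoconvex model for $U_s$, and to compare $\rho_s^n$ with the Euclidean volume using the globally defined background form $\omega$, a comparison that remains uniform on a compact neighbourhood of $x_0$. At points where $\theta_s(x)=-\infty$, no lower estimate is required: the upper bound (applied with $\theta_s(y)<-M$ on a small ball, $M$ arbitrary) already forces the supremum to $-\infty$, matching the claimed limit.
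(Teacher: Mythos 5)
Your proposal is correct and follows essentially the same route as the paper: the upper bound via the sub-mean value inequality for $\abs{\phi}^2$ on a small ball combined with upper semi-continuity of $\theta_s$ (the paper takes $r=1/m$ where you take a fixed $r_\varepsilon$ and a double limit, which is equivalent), and the lower bound via Ohsawa--Takegoshi extension from the point $x$ with a constant independent of the weight $m\theta_s$, which is exactly what the paper's Proposition~\ref{P:OT} supplies using the bounded holomorphic coordinate functions on the Stein submanifold $U_s$.
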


\begin{proof}
The proof is essentially same with the proof of Theorem 14.2 in \cite{dem}.
We may assume that $U$ is biholomorphic to a bounded strongly pseudoconvex domain in $C^{n+d}$, so $U_s$ is a Stein manifold.
This implies that $\mathcal{H}_s^{(m)}$ is a nontrivial seperable Hilbert space.
Hence there exists an orthonormal basis $\{\sigma^s_k\}_{k\in\NN}$ for $\mathcal{H}_s^{(m)}$.
Now we define the Bergman kernel $\vp_m^s$ by
\begin{equation*}
\vp_m^s(x)
=
\frac{1}{m}\log\sum_{k=1}^\infty\abs{\sigma_k^s(x)}^2.
\end{equation*}
It follows immediately that the Bergman kernel satisfies the extremal property:
\begin{equation*}
\vp_m^s(x)
=
\sup\left\{
	\frac{1}{m}\log\abs{\phi(x)}^2:
	\phi\in\mathcal H_s^{(m)}
	\;\;\text{s.t.}\;\;\norm{\phi}_{m,s}^2\le1
\right\}.
\end{equation*}

Take a local coordinate neighborhood $B_\varepsilon:=\{\zeta\in U_s:\abs{x-\zeta}<\varepsilon\}$ in $U_s$. Denote by $d\lambda$ the Lebesgue measure in $B_\varepsilon$. For any $\phi\in\mathcal H_s^{(m)}$ with $\norm{\phi}_{m,s}^2\le1$, for $0<r<\varepsilon$, the mean value inequality implies
\begin{align*}
\abs{\phi(x)}^2
&\le
\frac{n!}{\pi^nr^{2n}}
\int_{B_r}\abs{\phi(\zeta)}^2 d\lambda(\zeta)\\
&\le
\frac{n!}{\pi^nr^{2n}}
\sup_{\zeta\in B_r}
\paren{e^{m\theta_s(\zeta)}}
\int_{B_r}\abs{\phi(\zeta)}^2e^{-m\theta_s} d\lambda(\zeta).
\end{align*}
Let $C_s:=\sup_{B_r} d\lambda/(\rho_s)^n$.
Then it follows that
\begin{align*}
\abs{\phi(x)}^2
&\le
\frac{n!C_s}{\pi^nr^{2n}}
\sup_{\zeta\in B_r}
\paren{e^{m\theta_s(\zeta)}}
\int_{B_r}\abs{\phi(\zeta)}^2e^{-m\theta_s}(\rho_s)^n\\
&\le
\frac{n!C_s}{\pi^nr^{2n}}
\sup_{\zeta\in B_r}
\paren{e^{m\theta_s(\zeta)}}.
\end{align*}
Hence we have
\begin{equation*}
\frac{1}{m}\log\abs{\phi(x)}^2
\le
\sup_{\zeta\in B_r}\theta_s(\zeta)
+
\frac{1}{m}\paren{\log\frac{n!}{\pi^nr^{2n}}+\log C_s}.
\end{equation*}
Since $\theta_s$ is upper-semicontinuous, taking $r=1/m$ it follows that
\begin{equation*}
\theta_s(x)\ge\lim_{m\rightarrow\infty}\vp_m^s(x).
\end{equation*}

Conversely, first note that
$$
\sigma=\frac{dz^1\wedge\dots\wedge dz^{n+d}}{f^*(ds^1\wedge\dots\wedge ds^d)}
$$
is a holomorphic section of $K_{X'/S'}$ over $U\cap X'$. We remark that as a relative canonical form $\sigma$ can be written as follows:
Assume that on some open subset of $U'$ the functions $(z^1,\ldots,z^n)$ define local coordinates for $U_s$, $s\in S'$. Then
 \begin{equation}\label{E:sigmainz}
 \sigma =\left(\frac{\pt(f^1,\ldots,f^d)}{\pt(z^{n+1},\ldots,z^{n+d})}\right)^{-1}{\hspace{-5mm}}\cdot dz^1\we\ldots\we dz^n.
 \end{equation}
In particular, $K_{U_s}$ is trivial. There exist $n$ bounded holomorphic functions on $U_s$ which form a local coordinate near $x$ because $U_s$ is embedded in $U$.
Therefore, for any $a\in\CC$, Ohsawa-Takegoshi extension theorem (e.g.~Proposition~\ref{P:OT}) says that there exists $\phi\in\mathcal O(U_s)$ such that $\phi(x)=a$ and
\begin{equation*}
\int_{U_s}\abs{\phi}^2e^{-m\theta_s}(\rho_s)^n
\le C'_s\abs{a}^2e^{-m\theta_s(x)},
\end{equation*}
where the constant $C'_s$ does not depend on $m$.
If we choose $a\in\CC$ such that the right hand side is $1$, then
\begin{equation*}
\int_{U_s}\abs{\phi}^2e^{-m\theta_s}(\rho_s)^n \le 1, \text{ i.e. } \phi \in \mathcal H^{(m)}_s.
\end{equation*}
It follows that
\begin{equation*}
  \frac{1}{m} \log|\phi(x)|^2=\frac{1}{m} \log|a|^2 \geq \theta_s(x) - \frac{1}{m}\log C'_s ,
\end{equation*}
hence
\begin{equation*}
\vp_m^s(x)
\ge
\theta_s(x)+\frac{1}{m}\log C'_s.
\end{equation*}
This completes the proof.
\end{proof}

\begin{proposition}\label{P:OT}
Let $(W,\omega)$ be a Stein manifold with a K\"ahler metric such that $K_W$ is trivial.
Suppose that there exists bounded holomorphic functions $q^1,\ldots,q^n$ which form a local coordinate near a given point $x\in W$. Then for $a\in \CC$, there exist a global holomorphic function $\phi\in \mathcal O(W)$ with the following properties:
  \begin{itemize}
    \item[(i)] $\phi(x)= a$
    \item[(ii)]
    for any \psh\ function $\Psi$ on $W$
    $$
    \int_W |\phi|^2 e^{-\Psi}dV_{W,\omega}\leq C_n\frac{1}{|\Lambda^n(dq)(x)|_\omega^2} \cdot |a|^2 e^{-\Psi(x)},
    $$
    where the constant $C_n$ depends only upon $n$, and not on $\Psi$.
  \end{itemize}
\end{proposition}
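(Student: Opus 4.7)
The plan is to apply the Ohsawa-Takegoshi-Manivel $L^2$ extension theorem with the zero-dimensional subvariety $\{x\}\subset W$ as the source of extension, using the bounded coordinate functions $q^1,\ldots,q^n$ to normalize the construction and produce the explicit Jacobian factor $|\Lambda^n(dq)(x)|^2_\omega$ in the estimate.

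Set $s^i := q^i - q^i(x)$, so that $s=(s^1,\ldots,s^n)\colon W\to\CC^n$ is a bounded holomorphic map with $s(x)=0$ and $ds^1\wedge\cdots\wedge ds^n(x)\neq 0$; after rescaling we may assume $|s|\le 1$ on $W$. The triviality of $K_W$ lets us work directly with holomorphic functions rather than canonical sections, matching the $L^2$-norm $\int|\cdot|^2\,dV_{W,\omega}$ appearing on the left of (ii). Invoking OT in Demailly's formulation for zero loci of holomorphic sections of hermitian vector bundles, applied to the trivial rank-$n$ bundle on $W$ with section $s$ and plurisubharmonic weight $\Psi$, one obtains $\phi\in\cO(W)$ with $\phi(x)=a$ satisfying
\begin{equation*}
\int_W |\phi|^2 e^{-\Psi}\,dV_{W,\omega} \;\le\; C_n\cdot\frac{|a|^2 e^{-\Psi(x)}}{|\Lambda^n(ds)(x)|^2_\omega},
\end{equation*}
where $C_n$ depends only on $n$. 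Since $ds^i=dq^i$, the Jacobian factor becomes $|\Lambda^n(dq)(x)|^2_\omega$, which yields the bound in (ii); property (i) is built into the extension procedure.

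The main obstacle is identifying the correct form of the Ohsawa-Takegoshi theorem to apply: the classical formulation handles extension from a hypersurface, while here $\{x\}$ has codimension $n$. One option is to cite a direct multi-codimension generalization (as in Demailly or Manivel); alternatively one may iterate the codimension-one version $n$ times along the flag $\{s^1=0\}\supset\{s^1=s^2=0\}\supset\cdots\supset\{x\}$. In the iterative approach each intermediate Stein submanifold inherits a trivial canonical bundle and bounded local coordinate functions from $W$, so every step contributes a constant depending only on the codimension, whose product is absorbed into a single $C_n$. A secondary technical point is to ensure that the weight on the trivial vector bundle carrying $s$ is chosen so that the curvature condition of OT is met automatically by the boundedness of $s$; this is the role of the normalization $|s|\le 1$.
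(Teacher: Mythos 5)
Your proposal follows essentially the same route as the paper: the authors also rescale the bounded coordinates $q^1,\ldots,q^n$ (by $C=e\sup_W|q|$) to view them as a section of the trivial rank-$n$ bundle whose zero locus is $\{x\}$, check that flatness of that bundle plus plurisubharmonicity of $\Psi$ gives the required curvature inequality, and invoke Demailly's multi-codimension Ohsawa--Takegoshi--Manivel theorem (Theorem 13.6 in \cite{dem}), absorbing the extra factor $|q|^{2n}(-\log|q|)^2\le 1$ coming from the Ohsawa weight into the constant. Your normalization $s=q-q(x)$ is in fact slightly more careful than the paper's (which does not translate $q$ so that it vanishes at $x$), and the iterative codimension-one alternative you sketch is not needed.
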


\begin{proof}
Let $C:=e\sup_W\sqrt{\sum_{j=1}^n\abs{q^j}^2}$.
We consider $q=\frac{1}{C}(q^1,\ldots,q^n)$ as a holomorphic section of the trivial vector bundle $E = W\times\CC^n$. 
We denote by $\abs{q}$ the Euclidean norm of the section $s$.
Then we have
$$
\abs{q}^2 \leq \frac{1}{e^2}
$$
holds.
Note that the curvature $\Theta_E$ of $E$ vanishes.

The (trivial) anti-canonicalline bundle $-K_W$  is equipped with the hermitian metric $h_L= e^{-\Psi_L}$ with $\Psi_L =  \Psi$. Then
$$
\im \Theta_{-K_X} + n \im\ddbar \log \abs{q}^2 \geq 0
$$
in terms of currents.

Now we  apply \cite[Theorem 13.6]{dem}: There exists a global holomorphic section $\phi$ of $K_X-K_X=\mathcal O(W)$ with $\phi(x)=a$ and
$$
\int_W \frac{|\phi|^2  e^{-\Psi}}{|q|^{2n}(-\log |q|)^2}dV_{W,\omega}
\leq
C_n\frac{|a|^2}{|\Lambda^n(dq)(x)|_\omega^2} e^{ - \Psi(x)}.
$$
Since $\abs{q}\leq (1/e)$, it follows that
$$
\int_W |\phi|^2 e^{-\Psi}dV_{W,\omega}  \leq
C_n\frac{|a|^2}{|\Lambda^n(dq)(x)|_\omega^2} e^{ - \Psi(x)}.
$$
\end{proof}

\begin{proof}[Proof of Theorem \ref{T:main_theorem}]

Let $\phi\in\mathcal H_s^{(m)}$ be a holomorphic function on $U_s$ with $\norm{\phi}_{m,s}^2\le1$. Then the H\"older inequality implies that
\begin{equation}
\begin{aligned}\label{E:upper_bound}
\int_{U_s}\abs{\phi}^{2/m}e^{-\theta_s}\rho^n_s
&\le
\paren{\int_{U_s}\abs{\phi}^2e^{-m\theta_s}\rho^n_s
}^{\frac{1}{m}}
\paren{\int_{U_s}\rho^n_s}^{\frac{m-1}{m}}\\
&\le
\paren{\vol(X_s)}^{\frac{m-1}{m}}
\le C
\end{aligned}
\end{equation}
where $C$ can be taken as $\mathrm{max(1,\vol(X_s))}$, and the volume of $X_s$ is given by \eqref{E:norm} being taken with respect to the \ka\ class  $\omega\vert_{X_s}$. Hence this constant is independent of $m$ and $s$.

On the other hand, the equation \eqref{E:potential} implies that in local coordinates, it can be written as
\begin{equation*}
e^{-\theta_s}\rho^n_s
=
e^{-\psi_U}
e^{-\eta_s}\rho^n_s
=
e^{-\psi_U}\omega_s^n
=
\frac{dV_X}{\omega^n\wedge f^*dV_S}
\cdot\omega_s^n
=
\frac{dV_X}{f^*dV_S},
\end{equation*}
where $dV_X$ and $dV_S$ are Euclidean volume form in $U$ and $p(U)$ with respect to the coordinates $(z^1,\ldots,z^{n+d})$ and $(s^1,\ldots,s^d)$, respectively.
Combining with \eqref{E:upper_bound}, it follows that
\begin{equation*}
\int_{U_s}\abs{\phi}^{2/m}\frac{dV_X}{f^*dV_S}
\le C.
\end{equation*}

Applying the $L^{2/m}$ version of the Ohsawa-Takegoshi extension theorem in \cite{Berndtsson:Paun2} to
$$
\phi
\paren{
	\frac{dz^1\we\dots\we dz^{n+d}}{f^*\paren{ds^1\we\dots\we ds^d}}
}^{\otimes m},
$$
there exists a $m$-pluricanonical form $\hat F=F\paren{dz^1\we\dots\we dz^{n+d}}^{\otimes m}$ in $U$ with the following properties:
\begin{itemize}
\item[(1)] The restriction of $F$ to $U_s$ is equal to $\phi$.
\item[(2)] There exists a numerical constant $C_0>0$ independent of $m$ and $s$ such that
	\begin{equation*}
	\int_U\abs{F}^{2/m}dV_X
	\le
	C_0
	\int_{U_s}\abs{\phi}^{2/m} \frac{dV_X}{f^*(dV_S)}.
	\end{equation*}
\end{itemize}
It is essential that the constant $C_0$ does not depend on $s$. In fact, the Ohsawa-Takegoshi theorem states that the constant $C_0$ only depends on the sup norm of the holomorphic functions that define the smooth subvariety, interpreted as section of a certain vector bundle. In our case the smooth subvariety is $U_s$ and the section is given by the holomorphic family. Hence the sup norms of the sections are uniformly bounded in terms of the $s$-variable.

For invoking the Ohsawa-Takegoshi extension theorem, however, the holomorphic function $\phi$ is assumed to be integrable with respect to the Ohsawa measure. (For the details, see \cite{Berndtsson:Paun2, dem, Paun})
\medskip

Now the mean value inequality can be applied to the global holomorphic function $F$ on $U$:
The mean value theorem implies that there exists a constant which depends only on $\omega$ and $r$ such that
\begin{equation*}
\abs{F(x)}^{2/m}
\le
C_r\int_{B_r(x)}\abs{F(z)}^{2/m}dV_X.
\end{equation*}
All together, it follows that $\abs{F(x)}^{2/m}$ is bounded from above by a constant which is independent of $m$ and $s$. Now we have an estimate for the restriction $F|{U_s}$. Therefore the weight function $\theta_s$ has the same property (by Theorem~\ref{T:demailly} above).

This argument is carried out for all $\theta_s$, and hence the function $\theta$ is uniformly bounded on $V\cap X'$ where $V\subset\subset U$.
This implies that the curvature $\Theta_{h^\rho_{X'/S'}}(K_{X'/S'})$ extends as a $d$-closed positive real $(1,1)$-current on the total space $X$.
It completes the proof.
\end{proof}
The proof of Corollary~\ref{C:wp} was given in Section~\ref{S:int}.

\section{Application to the projective case}\label{S:proj}

\subsection{Relative Riemann-Roch Theorem for hermitian vector bundles over \ka\ manifolds}
This part is based upon the theorem of Bismut, Gilet and Soul\'e, which generalizes the Riemann-Roch-Hirzebruch-Grothendieck theorem from cohomology classes to distinguished representatives. Given a proper, smooth \ka\ morphism $f:Y\to S$ with relative \ka\ form $\omega_{Y/S}$ and a hermitian vector bundle $(F,h)$ on $Y$, there exists a Quillen metric $h^Q$ on the determinant line bundle
$$
\lambda=\lambda(F):= {\rm det}f_!(F)
$$
taken in the derived category satisfying

\begin{theorem}[\cite{bgs}]\label{T:bgs}
The Chern form of the determinant line bundle $\lambda(F)$ on the base $S$ is equal to the component in degree two of the following fiber integral.
   \begin{equation}\label{E:bgs}
      c_1(\lambda(F),h^Q)= -\left[\int_{Y/S}\textit{td} (Y/S,\omega_{Y/S})\textit{ch}(F,h)\right]^{(2)}
   \end{equation}
 Here $\textit{ch}$ and $\textit{td}$ resp.\ stand for the Chern and Todd character forms resp.
\end{theorem}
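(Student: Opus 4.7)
The plan is to follow the original strategy of Bismut-Gillet-Soul\'e, which refines the cohomological Riemann-Roch-Grothendieck theorem at the level of differential forms by means of Bismut's superconnection formalism. First I would construct the $L^2$ metric on the direct images $R^i f_* F$ via fiberwise Hodge theory: for each $s\in S$, Dolbeault cohomology on $Y_s$ with values in $F|_{Y_s}$ is identified with harmonic forms using the relative K\"ahler metric $\omega_{Y/S}|_{Y_s}$ and $h|_{Y_s}$. This gives a smooth Hermitian metric on each $R^i f_* F$ (away from the jumping locus in the non-locally-free case one reduces to a Koszul-type resolution), and hence on the derived determinant $\lambda(F)=\bigotimes_i (\det R^i f_* F)^{\otimes(-1)^i}$.

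Second, the Quillen metric $h^Q$ is obtained from the $L^2$ metric by multiplying with the exponential of the Ray-Singer analytic torsion $\tau(\omega_{Y/S},h)$, defined via $\zeta$-regularized determinants of the fiberwise $\bar\partial$-Laplacians $\Box_q$ acting on $(0,q)$-forms with values in $F$. This correction is precisely what absorbs the defect between the naive $L^2$-curvature and the Grothendieck-Riemann-Roch fiber integral on the right-hand side of \eqref{E:bgs}.

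Third, I would compute the curvature of $(\lambda(F),h^Q)$ by means of Bismut's superconnection $\mathbb{A}_t$ on the infinite-dimensional bundle of fiberwise Dolbeault complexes. Its Chern character $\mathrm{ch}(\mathbb{A}_t)$ is a $d$-closed form whose cohomology class is independent of $t$; the limit $t\to 0$ produces the local fiber integral $\int_{Y/S}\mathit{td}(Y/S,\omega_{Y/S})\,\mathit{ch}(F,h)$ by heat kernel asymptotics, while the limit $t\to\infty$ produces $\mathrm{ch}(R^\bullet f_* F, h^{L^2})$. The transgression between these two limits is precisely the analytic torsion form, and extracting the degree-two component, remembering that $\lambda$ is an alternating tensor product of determinants, yields \eqref{E:bgs}; the sign on the right-hand side arises from the $(-1)^i$ in the definition of the derived direct image determinant together with the convention $c_1(\det V)=\mathrm{ch}(V)^{(2)}$.

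The main technical obstacle is the local family index theorem itself. One has to control the small-time heat kernel expansion of $\mathbb{A}_t$ uniformly and carry out Getzler's rescaling (which extracts the $\widehat A$-genus, or equivalently the Todd class in the holomorphic K\"ahler setting) in a family, including its transgression up to $t=\infty$. This is the genuine analytic input that goes beyond the topological Grothendieck-Riemann-Roch theorem and is what pins down the particular closed representative of $c_1(\lambda(F),h^Q)$ appearing in \eqref{E:bgs}.
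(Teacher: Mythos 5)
The paper does not prove this statement: Theorem~\ref{T:bgs} is quoted from Bismut--Gillet--Soul\'e \cite{bgs} and used as a black box, so there is no internal proof to compare yours against. Your outline is, in substance, a faithful summary of the actual BGS strategy: $L^2$ metrics on the determinant of the derived direct image via fiberwise Hodge theory, the Quillen correction by Ray--Singer analytic torsion, the Bismut superconnection $\mathbb{A}_t$ with the $t\to 0$ limit giving the local fiber integral of $\mathit{td}\cdot\mathit{ch}$ and the $t\to\infty$ limit giving the $L^2$ Chern character, with the torsion forms as the transgression. You also correctly identify the local family index theorem (small-time asymptotics plus Getzler rescaling, uniformly in the family) as the genuine analytic core. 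As a proof it is of course only a roadmap --- filling in those steps is the content of all three BGS papers --- but as a roadmap it is the right one.

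Two smaller points. First, your parenthetical about reducing to a Koszul-type resolution near the jumping locus of the cohomology is not how BGS handle non-local-freeness: the determinant line bundle and the Quillen metric are shown to be smooth across jumping loci by working with the finite-dimensional subcomplexes spanned by eigenforms of the fiberwise Laplacians with eigenvalue below a cut-off, and gluing over the resulting open cover of $S$; this smoothness is precisely what makes the Quillen metric preferable to the bare $L^2$ metric. Second, the overall minus sign in \eqref{E:bgs} is purely a matter of the convention chosen for $\lambda=\det f_!(F)$ versus its dual, so your remark that it comes from the alternating signs in the derived determinant is acceptable but should be pinned to the specific convention the paper uses. Finally, note that the paper applies the theorem to a virtual bundle $F$ (a formal difference in the Grothendieck group), so one also needs the additivity of the whole construction in $F$, which the paper records after the theorem statement; your argument extends to that case without difficulty but should say so.
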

Based upon the universal properties of the construction, it was generalized from hermitian vector bundles $(F,h^F)$ to elements of the Grothendieck group, i.e.\ formal differences of such objects. It proved to be consistent to define $ch(G-H,h^G,h^H)= ch(G,h^G)- ch(H,h^H)$.

\subsection{Application to degenerating families of \cy\ manifolds}

We consider the situation of Theorem~\ref{T:main_theorem}, and assume that there exists a hermitian, holomorphic line bundle $(L,h_0)$ on $X$ such that $\omega= c_1(L,h_0)$.

Following \cite[Section 10]{f-s} we define
$$
F=(K_{X'/S'}-K^{-1}_{X'/S'})\otimes(L-L^{-1})^{\otimes n}
$$
on $X$.
Note that the virtual bundles $K_{X'/S'}-K^{-1}_{X'/S'}$ and $L-L^{-1}$ are of rank zero so that the Chern character form in question is
$$
c_1(F,h^F)= 2 c_1( K_{X'/S'}-K^{-1}_{X'/S'}, h^\rho ) \we 2^n c_1(L,h^L)^n + \ldots,
$$
where terms of degree larger than $2n+2$ are omitted. At this point we use the normalization \eqref{E:star} for $\rho$.

Now by \eqref{E:bgs} and \eqref{E:wpfib} the curvature of the determinant line bundle is
\begin{proposition}
$$
c_1(\lambda(F),h^Q) =  2^{n+1} \omega^{WP}.
$$
\end{proposition}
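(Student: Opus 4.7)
The plan is to substitute the virtual bundle $F$ into the Bismut--Gillet--Soul\'e formula \eqref{E:bgs} and to isolate the degree-$(2n+2)$ component of the integrand on $X'/S'$. Using multiplicativity of the Chern character under tensor product and its additivity on formal differences,
$$\mathrm{ch}(F, h^F) = \mathrm{ch}(K_{X'/S'} - K_{X'/S'}^{-1}, h^\rho) \cdot \mathrm{ch}\paren{(L - L^{-1})^{\otimes n}, h_0}.$$
Each of the two factors has virtual rank zero, so its Chern character form begins in strictly positive degree:
$$\mathrm{ch}(K_{X'/S'} - K_{X'/S'}^{-1}, h^\rho) = 2\sinh c_1(K_{X'/S'}, h^\rho) = 2 c_1(K_{X'/S'}, h^\rho) + \tfrac{1}{3} c_1(K_{X'/S'}, h^\rho)^3 + \cdots$$
starts in degree $2$, while $\mathrm{ch}((L - L^{-1})^{\otimes n}, h_0) = (2 \sinh c_1(L, h_0))^n$ starts in degree $2n$. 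The Todd form $\mathrm{td}(T_{X'/S'}, \omega) = 1 - \tfrac{1}{2} c_1(K_{X'/S'}) + \cdots$ begins at degree $0$.

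The key step is then a pure degree count: for the product $\mathrm{td}(T_{X'/S'}) \cdot \mathrm{ch}(F, h^F)$ to contribute in total degree $2n+2$, one must pair the degree-$0$ constant of $\mathrm{td}$ with the leading degree-$2$ term of $\mathrm{ch}(K - K^{-1})$ and the leading degree-$2n$ term of $\mathrm{ch}((L - L^{-1})^{\otimes n})$, since replacing any of these by a higher-order successor immediately overshoots. Consequently
$$\bparen{\mathrm{td}(T_{X'/S'}) \cdot \mathrm{ch}(F, h^F)}^{(2n+2)} = 2^{n+1}\, c_1(K_{X'/S'}, h^\rho) \we c_1(L, h_0)^n.$$

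Inserting this expression into \eqref{E:bgs} and recognizing the resulting fiber integral via \eqref{E:wpfib} with $\omega = c_1(L, h_0)$ gives
$$c_1(\lambda(F), h^Q) = 2^{n+1} \int_{X'/S'} c_1(K_{X'/S'}, h^\rho) \we \omega^n = 2^{n+1}\, \omega^{WP},$$
which is the claim. The argument is essentially formal once the degree count has been set up; the only nontrivial point to verify is that the rank-zero structure of both tensor factors of $F$ genuinely forces every higher-order contribution from $\mathrm{td}$ (as well as from the subleading terms of either Chern character) to land in a strictly higher total degree. The remaining issues are bookkeeping: reconciling the sign convention in \eqref{E:bgs}, identifying the curvature form $\Theta_{h^\rho_{X'/S'}}(K_{X'/S'})$ in \eqref{E:wpfib} with the Chern form $c_1(K_{X'/S'}, h^\rho)$ under the paper's normalization, and noting that the normalization \eqref{eq:normrho} of $\rho$ removes any ambiguity coming from adding a pull-back term from the base.
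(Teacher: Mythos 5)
Your proposal is correct and follows essentially the same route as the paper: the paper's own justification is precisely the observation that both virtual factors of $F$ have rank zero, so the only degree-$(2n+2)$ term of $\textit{td}\cdot\textit{ch}(F,h^F)$ is $2\,c_1(K_{X'/S'},h^\rho)\we 2^n c_1(L,h_0)^n$, after which \eqref{E:bgs} and \eqref{E:wpfib} give the result. Your write-up is in fact more careful than the paper's (which compresses this into a single, somewhat garbled displayed formula), and you correctly flag the sign convention in \eqref{E:bgs} as the one remaining bookkeeping point.
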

We apply \cite[Thm.\ 2]{Schumacher} (cf.\ \cite{sch:ext}), and get the following fact for a map $f:X\to S$ as in this section.
\begin{theorem}
After a blow up $\wt S\to S$ of the set $S\backslash S'$ of singular values of $f$, the determinant line bundle $(\lambda(F),h^Q)$ extends to $\wt S$ as a holomorphic line bundle equipped with a singular hermitian metric, whose curvature current is positive.
\end{theorem}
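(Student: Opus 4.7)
The plan is to reduce the statement to a direct invocation of the extension theorem of \cite{Schumacher,sch:ext}, with Corollary~\ref{C:wp} supplying the single nontrivial input. That theorem, in the form we need, starts with a holomorphic line bundle on the complement of an analytic subset of a complex manifold equipped with a singular hermitian metric whose curvature on the complement agrees with the restriction of a positive closed $(1,1)$-current on the full ambient space, and produces, after a modification of the singular locus, an extension of the line bundle together with an extension of the metric as a singular hermitian metric whose curvature is a positive current.

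First I would note that on $S'$ the determinant line bundle $\lambda(F)=\det f_!(F)$ is well defined as a holomorphic line bundle by the Grothendieck-Knudsen-Mumford construction, since $f':X'\to S'$ is proper and smooth and $F$ is the image in the Grothendieck group of genuine hermitian holomorphic bundles on $X$; the Quillen metric $h^Q$ is then defined on $\lambda(F)|_{S'}$ by the Bismut-Gillet-Soul\'e formalism and its curvature is the smooth form computed in the preceding Proposition, namely
\[
c_1(\lambda(F),h^Q) = 2^{n+1}\,\omega^{WP} \quad \text{on } S'.
\]
Second, Corollary~\ref{C:wp} asserts that $\omega^{WP}$ extends across $W=S\setminus S'$ as a closed positive $(1,1)$-current on $S$, and multiplying by the positive constant $2^{n+1}$ preserves both closedness and positivity. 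Hence the curvature current of $(\lambda(F),h^Q)$ on $S'$ is the restriction of a globally defined closed positive current on $S$, which is precisely the hypothesis of \cite[Thm.~2]{Schumacher}. Applying that theorem yields a modification $\wt S\to S$ with center in $W$, a holomorphic extension of the line bundle to $\wt S$, and an extension of $h^Q$ as a singular hermitian metric whose curvature is a positive current; this is the desired conclusion.

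The main obstacle, and in fact the only substantive point beyond bookkeeping, is verifying that the Schumacher extension theorem may be applied with $h^Q$ as the initial metric. The extension procedure of \cite{Schumacher,sch:ext} is purely curvature-theoretic: it uses the metric only through its curvature current and through estimates of the corresponding local potentials. Since we have identified that curvature with the canonical positive current extending $\omega^{WP}$, the relevant local plurisubharmonic potentials coincide on $S'$ with potentials of the extended current, and the mechanism of \cite{Schumacher} produces the modification $\wt S$ and the extended singular hermitian line bundle. No further fiber-by-fiber analysis of the Quillen metric is needed, because all the hard analytic work has already been done in proving Theorem~\ref{T:main_theorem} and its Corollary.
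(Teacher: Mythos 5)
Your proposal matches the paper's argument: the paper likewise deduces the theorem by combining the identity $c_1(\lambda(F),h^Q)=2^{n+1}\omega^{WP}$ with the extension of $\omega^{WP}$ as a closed positive current (Corollary~\ref{C:wp}) and then invoking \cite[Thm.~2]{Schumacher} (cf.\ \cite{sch:ext}) to extend the line bundle and the Quillen metric after a blow-up. Your additional remarks on verifying the hypotheses of the extension theorem are consistent with, and slightly more explicit than, the paper's one-line application.
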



\begin{thebibliography} {2010}


\bibitem{Berndtsson:Paun1} Berndtsson, B., P\v{a}un, M., \emph{Bergman kernels and the pseudoeffectivity of relative canonical bundles}, Duke Math. J. 145 (2008), no. 2, 341--378

\bibitem{Berndtsson:Paun2}
Berndtsson, B., P\v{a}un, M.,
\emph{Quantitative extensions of pluricanonical forms and closed positive currents}, Nagoya Math. J. 205 (2012), 25-65.

\bibitem{bgs}
Bismut, J.-M., Gillet, H.,  Soul\'e, C., Analytic torsion and holomorphic determinant bundles. I: Bott-Chern forms and analytic torsion. II: Direct images and Bott-Chern forms. III: Quillen metrics on holomorphic determinants. Commun.\ Math.\ Phys.\ \textbf{115} (1988), 49--78, 79--126, 301--351.


\bibitem{Braun:Choi:Schumacher}
Braun, M., Choi, Y.-J., Schumacher, G.,
\emph{K\"ahler forms for families of Calabi-Yau manifolds},
arXiv:1702.07886


\bibitem{dem}
Demailly, J.-P., \emph{Analytic methods in algebraic geometry, on the web page of the
author, December 2009.}

\bibitem{f-s}
Fujiki, A., Schumacher, G., The moduli space of extremal compact \ka\  manifolds and generalized Weil-Petersson metrics. Publ.\ Res.\ Inst.\ Math.\ Sci.\ {\bf 26}, 101--183 (1990).





\bibitem{Paun} P\v{a}un, M.,
\emph{Relative adjoint transcendental classes and Albanese maps of compact K\"ahler manifolds with nef Ricci curvature},
arxiv:1209.2195[math.CV]

\bibitem{Schumacher}
Schumacher, G.,
 \emph{Positivity of relative canonical bundles and applications},
 Invent. Math. 190 (2012), no. 1, 1--56.

\bibitem{sch:ext}
Schumacher, G., 
\emph{An extension theorem for Hermitian line bundles. Analytic and Algebraic Geometry}, 
225--237, Hindustan Book Agency, New Delhi, 2017, arXiv:1507.06195.


\bibitem{Yau}
Yau, S.-T.,
\emph{On the Ricci curvature of a compact K\"ahler manifold and the complex Monge-Amp\`ere equation. I},
Comm. Pure Appl. Math., no. 31(3), 339--411, 1978.

\end{thebibliography}
\end{document}